\def\div{\mathop{\mathrm{div}}\nolimits}
\def \R{\mathbb{R}}
\def \bRb{\mathbb{R}}
\newtheorem{thm}{Theorem}
\newdefinition{rmk}{Remark}
\newproof{pf}{Proof}
\newdefinition{proposition}{Proposition}
\newdefinition{assumption}{Assumption}
\journal{***}
\begin{document}

\begin{frontmatter}



\title{Stationary solutions of the Vlasov-Fokker-Planck equation: existence, characterization and phase-transition}


\author[MHD]{M.H. Duong\corref{cores}}\ead{m.h.duong@warwick.ac.uk}

\cortext[cores]{Corresponding author.}
\address[MHD]{Mathematics Institute, University of Warwick, Coventry CV4 7AL, UK.}
\author[JT]{J. Tugaut}
\address[JT]{Universit\'e Jean Monnet, Institut Camille Jordan, 23, rue du docteur Paul Michelon,
CS 82301,
42023 Saint-\'Etienne Cedex 2,
France.}
\ead{tugaut@math.cnrs.fr}
\begin{abstract}
In this paper, we study the set of stationary solutions of the Vlasov-Fokker-Planck (VFP) equation. This equation describes the time evolution of the probability distribution of a particle moving under the influence of a double-well potential, an interaction potential, a friction force and a stochastic force. We prove, under suitable assumptions, that the VFP equation does not have a unique stationary solution and that there exists a phase transition. Our study relies on the recent results by Tugaut and coauthors regarding the McKean-Vlasov equation.
\end{abstract}

\begin{keyword}


Invariant measure \sep Vlasov-Fokker-Planck equation \sep McKean-Vlasov equation \sep stochastic processes.

\MSC 60G10 \sep 35Q83  \sep 35Q84. 
\end{keyword}

\end{frontmatter}


\section{Introduction}
\label{sec:intro}

\subsection{The Vlasov-Fokker-Planck equation}

We consider the following \emph{Vlasov-Fokker-Planck (VFP)} equation,
\begin{equation}
\label{eq:VFP}
\partial_t \rho = - \div_q\Big(\rho \frac{p}{m}\Big) + \div_p\Big( \rho (\nabla_q V + \nabla_q \psi \ast \rho + \gamma \frac{p}{m})\Big)+\gamma kT\Delta_p \rho.
\end{equation}
In this equation, the spatial domain is $\mathbb{R}^{2d}$ with coordinates $(q,p)\in \mathbb{R}^d\times \mathbb{R}^d$. The unknown is a time-dependent probability measure $\rho\colon[0,T]\to \mathcal{P}(\mathbb{R}^{2d})$. Subscripts as in $\div_q$ and $\Delta_p$ indicate that the differential operators act only on those variables. The functions $V = V(q)$ and $\psi = \psi(q)$ are given. The convolution $\psi \ast \rho$ is defined by $(\psi\ast\rho)(q)=\int_{\mathbb{R}^{2d}}\psi(q-q')\rho(q',p')\,dq'dp'$. Finally $\gamma, k$ and $T$ are positive constants.

Equation~\eqref{eq:VFP} is the forward Kolmogorov equation of the following stochastic differential equation (SDE), 
\begin{align}
\label{SDE}
&dQ(t)= P(t)\, dt,\nonumber
\\&dP(t)=-\nabla V(Q(t))\,dt-\nabla\psi\ast \rho_t(Q(t))\,dt-\gamma\, \frac{P}{m}\,dt+\sqrt{2\gamma kT}\, dW(t),
\end{align}
where $\rho_t$ is the law of $(Q_t,P_t)$.
This SDE models the movement of a particle with mass $m$ under a fixed potential $V$, an interaction potential $\psi$,  a friction force (the drift term $-\gamma\frac{P}{m}\,dt$) and a stochastic forcing described by the $d$-dimensional Wiener measures~$W$. In this model, $\gamma$ is the friction coefficient, $k$ is the Boltzmann constant and $T$ is the absolute temperature.

Eq. \eqref{eq:VFP} and system \eqref{SDE} play an important role in applied sciences in particular in statistical mechanics. For instance, they are used as a simplified model for chemical reactions, or as a model for particles interacting through Coulomb, gravitational, or volume exclusion forces, see e.g., \cite{Kramers40, NPT84,BD95}. Eq. \eqref{eq:VFP} (and related models) has been studied intensively in the literature by many authors from various points of view, see e.g. \cite{Degond86,BD95,BGM10,DPZ13,DPZ14,Duo15} and references therein. In particular, invariant probabilities of Eq. \eqref{eq:VFP} have been investigated in \cite{Dre87,BGM10} (see also \cite{Duo15}). However, in these papers, the potential $V$ is assumed to be either bounded or globally Lipschitz or convex. As a result, there is a unique stationary solution. In this paper, we show that when the potential $V$ is unbounded, non-convex and not globally Lipschitz, of which a double-well potential is a typical example, non-uniqueness and phase transition can occur. Herein, we characterise the set of stationary solutions in such a case. Our study relies on the recent results by Tugaut and co-authors about the McKean-Vlasov diffusion by showing that the set of stationary solutions of the Vlasov-Fokker-Planck equation is related to that of the McKean-Vlasov equation.
\subsection{Normalization}
We first write \eqref{eq:VFP} in dimensionless form. The non-dimensionalization for Eq. \eqref{eq:VFP} has been done previously in the literature, see for instance \cite[Section 2.2.4]{LRS10}. For the sake of convenience, we perform it here. By setting
\begin{equation*}
q=:L\widetilde{q},\quad p=:\frac{mL}{\tau}\widetilde{p},\quad t=:\tau\widetilde{t}
\end{equation*}
and 
\begin{equation*}
V(q)=:\frac{mL^2}{\tau^2}\widetilde{V}(\widetilde{q}),\quad \psi(q)=:\frac{mL^2}{\tau^2}\widetilde{\psi}(\widetilde{q}), \quad \rho(p,q,t)=:\frac{\tau^d}{m^dL^{2d}}\widetilde{\rho}(\widetilde{p},\widetilde{q},\widetilde{t}),
\end{equation*}
where $L$ is the characteristic length scale, and $\tau:=\frac{m}{\gamma}$ is the relaxation time of the particle dynamics.
Then the dimensionless form of the Vlasov-Fokker-Planck equation is (after leaving out all the tilde)
\begin{equation}
\label{eq: equationdimensionles}
\partial_t \rho = - \div_q\Big(\rho p\Big) + \div_p\Big( \rho (\nabla_q V + \nabla_q \psi \ast \rho + p)\Big)+\lambda\Delta_p \rho.
\end{equation}
where $\lambda=kT\tau^2m^{-1}L^{-2}$ is the dimensionless diffusion coefficient.

In this paper, we are interested in stationary solutions of Eq. \eqref{eq: equationdimensionles}, i.e., solutions of the following equation
\begin{equation}
\label{eq: stationary equation}
\mathsf{K}[\rho](\rho)=0,
\end{equation}
where 
\begin{equation}
\mathsf{K}[\mu](\rho):=- \div_q\Big(\rho p\Big) + \div_p\Big( \rho (\nabla_q V + \nabla_q \psi \ast \mu + p)\Big)+\lambda\Delta_p \rho
\end{equation}
for given $\mu\in L^1(\R^{2d})$. Note that for a given $\mu$, the operator $\mathsf{K}[\mu](\rho)$ is linear in $\rho$.  This can be seen as a  linearised operator of $\mathsf{K}[\rho](\rho)$. 
\subsection{Organisation of the paper}
The rest of the paper is organised as follows. In Section \ref{sec: char}, we state our assumptions and provide a characterization via an implicit equation for a solution of Eq. \eqref{eq: stationary equation}. In Section \ref{sec: results} we present main results of the paper which prove the existence, (non-) uniqueness and phase transition properties of such stationary solutions.
\section{Characterization of invariant probabilities}
\label{sec: char}

In this section, we characterize solutions of Eq. \eqref{eq: stationary equation}.

First of all, we consider the following assumptions:
\begin{assumption}[Assumptions for the potential $V$]\ \\
The potential $V$ satisfies the following assumptions.
\label{assumption V}
\begin{enumerate}[(V1)]
\item $V$ is a smooth function and there exists $m\in\mathbb{N}^*$ and $C_{2m}>0$ such that $\lim_{|x|\to+\infty}\frac{V(x)}{|x|^{2m}}=C_{2m}$, where $|\cdot|$ denotes the Euclidean norm.
\item The equation $\nabla V(x)=0$ admits a finite number of solutions. We do not specify anything about the nature of these critical points. However, the local minima where the Hessian is positive will be denoted by $a_0$.
\item $V(x)\geq C_4|x|^4-C_2|x|^2$ for all $x\in\mathbb{R}^d$ with $C_2,C_4>0$. 
\item $\displaystyle\lim_{|x|\to\pm\infty}{\rm Hess}\,V(x)=+\infty$ and ${\rm Hess}\,V(x)>0$ for all $x\notin K$ where $K$ is a compact of $\mathbb{R}^d$ which contains all the critical points of $V$.
\end{enumerate}
\end{assumption}
\begin{assumption}[Assumptions for the interaction potential $\psi$]\ \\ The interaction potential $\psi$ satisfies the following assumptions.
\begin{enumerate}[($\psi$1)]
\item There exists an even polynomial function $G$ on $\mathbb{R}$ such that $\psi(x)=G(|x|)$. And, $\deg(G)=:2n\geq2$.
\item $G$ and $G''$ are convex.
\item $G(0)=0$.
\end{enumerate}
\end{assumption}
The simplest example (most famous in the literature) is that $V(x)=\frac{x^4}{4}-\frac{x^2}{2}$ (i.e., $V$ is a double-well potential) and $\psi(x)=\frac{\alpha}{2} x^2$ for some $\alpha$ (i.e., $\psi$ is a quadratic interaction).
\begin{proposition}
\label{prop: presentation of invariant}
Suppose that Assumption 1 and Assumption 2 hold. If there exists a solution $\rho_\infty\in L^1\cap L^\infty$ of Eq. \eqref{eq: stationary equation} then 
\begin{equation}
\label{eq: stationary measure}
\rho_{\infty}(q,p)=Z_\lambda^{-1}\exp \left[-\frac{1}{\lambda}\Big(\frac{p^2}{2}+V(q)+ \psi\ast\rho_\infty(q)\Big)\right],
\end{equation}
where $Z_\lambda$ is the normalizing constant
\begin{equation}
\label{eq: Z}
Z_\lambda=\int_{\mathbb{R}^{2d}}\exp \left[-\frac{1}{\lambda}\Big(\frac{p^2}{2}+V(q)+ \psi\ast\rho_\infty(q)\Big)\right]\,dq\,dp.
\end{equation}
Conversely any measure whose density satisfies \eqref{eq: stationary measure} is invariant for \eqref{eq: equationdimensionles}.
\end{proposition}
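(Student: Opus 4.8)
The statement has two directions, and I would treat them separately, beginning with the easy converse. For a fixed $\mu$ write $H(q,p)=\tfrac{p^2}{2}+V(q)+\psi\ast\mu(q)$ and $M_\mu:=Z_\lambda^{-1}e^{-H/\lambda}$, so that \eqref{eq: stationary measure} is exactly $M_{\rho_\infty}$. The elementary observation that drives everything is $\nabla_p M_\mu=-\tfrac1\lambda M_\mu\,p$ and $\nabla_q M_\mu=-\tfrac1\lambda M_\mu\,(\nabla_q V+\nabla_q\psi\ast\mu)$. Inserting the first identity into the momentum block gives $\lambda\Delta_p M_\mu+\div_p(M_\mu p)=\div_p(\lambda\nabla_p M_\mu+M_\mu p)=0$, so the Ornstein--Uhlenbeck part annihilates $M_\mu$. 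For the transport block, $-\div_q(M_\mu p)+\div_p(M_\mu\nabla_q(V+\psi\ast\mu))=-p\cdot\nabla_q M_\mu+\nabla_q(V+\psi\ast\mu)\cdot\nabla_p M_\mu$, and substituting the two gradient identities shows the two terms cancel. Hence $\mathsf{K}[\mu](M_\mu)=0$ for every $\mu$; taking $\mu=\rho_\infty$ shows that any density of the form \eqref{eq: stationary measure} solves \eqref{eq: stationary equation}, which is the converse.

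For the forward direction I would freeze $\mu=\rho_\infty$ and view $\rho_\infty$ as a solution of the \emph{linear} stationary kinetic Fokker--Planck equation $\mathsf{K}[\mu](\rho_\infty)=0$ with fixed confining potential $W:=V+\psi\ast\mu$. The plan is to show $M_\mu$ is, up to normalization, the only such solution. First I would note that $\mathsf{K}[\mu]$ is the forward Kolmogorov operator of the Langevin system $dQ=P\,dt$, $dP=-\nabla W(Q)\,dt-P\,dt+\sqrt{2\lambda}\,dW$. Its generator is hypoelliptic: the noise fields $\partial_{p_i}$ together with their brackets against the transport term $p\cdot\nabla_q$ recover the $\partial_{q_i}$, so H\"ormander's condition holds. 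Consequently any distributional solution $\rho_\infty$ is smooth, and the strong maximum principle (or a Harnack inequality) forces $\rho_\infty>0$, so $h:=\rho_\infty/M_\mu$ is a well-defined positive smooth function.

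The core computation is the factorization $\mathsf{K}[\mu](M_\mu h)=M_\mu\,\widehat{\mathcal L}h$, with $\widehat{\mathcal L}h=-p\cdot\nabla_q h+\nabla_q W\cdot\nabla_p h-p\cdot\nabla_p h+\lambda\Delta_p h$ the generator of the momentum-reversed dynamics; it follows from the same two gradient identities as above. Thus $\widehat{\mathcal L}h=0$. Multiplying by $h$ and integrating against $M_\mu$, the Hamiltonian part $-p\cdot\nabla_q h+\nabla_q W\cdot\nabla_p h$ contributes nothing, because the vector field $M_\mu(-p,\nabla_q W)$ is divergence free (Liouville's theorem together with conservation of $H$), while the Ornstein--Uhlenbeck part collapses, after integrating by parts in $p$ and using $\nabla_p M_\mu=-\tfrac1\lambda M_\mu p$, to $-\lambda\int M_\mu|\nabla_p h|^2\,dq\,dp$. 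Hence $\int M_\mu|\nabla_p h|^2=0$, so $h=h(q)$; feeding this back into $\widehat{\mathcal L}h=0$ leaves $-p\cdot\nabla_q h(q)=0$ for all $p$, whence $h$ is constant. Normalizing via $Z_\lambda$ from \eqref{eq: Z} then produces \eqref{eq: stationary measure}.

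The main obstacle is making this energy identity rigorous: the pointwise definition $h=\rho_\infty/M_\mu$ is harmless, but $h$ may grow at infinity, and one must justify the integrations by parts and the vanishing of the boundary terms. Here the strong confinement in (V1) and (V3)--(V4), which makes $M_\mu$ and its Gaussian $p$-factor decay superexponentially, combined with $\rho_\infty\in L^1\cap L^\infty$, supplies the needed decay; concretely I would run the argument against a smooth cutoff $\chi_R$ supported on balls of radius $R$, bound the commutator terms carrying $\nabla\chi_R$ by the integrability of $\rho_\infty$ and of $M_\mu|\nabla_p h|^2$, and let $R\to\infty$. An equivalent and arguably cleaner route, which I would mention as an alternative, is to bypass the explicit $h$-calculation by invoking uniqueness of the invariant probability measure for the Langevin system above: the confinement assumptions furnish a Lyapunov function, hypoellipticity gives irreducibility and regularity, and standard ergodic theory identifies the unique invariant density with $M_\mu$, forcing $\rho_\infty=M_\mu$.
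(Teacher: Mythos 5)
Your argument is, at its core, the same as the paper's. The converse is the paper's Step 1 verification that $\mathsf{K}[\mu](u)=0$ for the Gibbs density $u$, and your forward direction follows the paper's two-step scheme exactly: freeze $\mu=\rho_\infty$, use hypoellipticity (H\"ormander) to get smoothness, and prove uniqueness for the linearized problem by an energy identity. The paper works with $f:=v\,u^{-1/2}$ and computes $\langle\mathsf{Q}f,f\rangle_{L^2}=\lambda\int u\,|\nabla_p(u^{-1/2}f)|^2$; since $u^{-1/2}f=v/u$, this is precisely your ratio $h=\rho_\infty/M_\mu$ and your identity $\int M_\mu|\nabla_p h|^2=0$ in symmetrized form, and the endgame ($h=h(q)$, then $-u\,p\cdot\nabla_q h=0$ forces $h$ constant, then normalization) is identical. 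Your maximum-principle step is superfluous: $h$ is well defined because $M_\mu>0$, and no sign information on $\rho_\infty$ is actually used anywhere in the computation.

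The one place you genuinely depart from the paper is where the real difficulty sits: justifying the integrations by parts. The paper does not attempt a cutoff argument; it builds the required decay into the hypotheses by proving uniqueness only within the weighted class $A$ (i.e.\ $f=v\,u^{-1/2}$ in $H^1$ with weight $1+|p|+|\nabla_q V+\nabla_q\psi\ast\mu|$) and checking in Remark~\ref{re: remark} that membership in $A$ kills the boundary terms; in Step 2 it then tacitly takes $\rho_\infty\in A$. Your cutoff plan, as sketched, is circular: testing the equation with $\chi_R^2 h$ produces commutator terms of the form $\int M_\mu\,h\,\nabla_p h\cdot\chi_R\nabla_p\chi_R$ (and transport analogues with $p\cdot\nabla_q\chi_R$), and absorbing them by Cauchy--Schwarz requires control of $\int h^2 M_\mu|\nabla\chi_R|^2=\int \rho_\infty^2\,M_\mu^{-1}|\nabla\chi_R|^2$ on the annulus, together with the finiteness of $\int M_\mu|\nabla_p h|^2$ --- the very quantity the identity is supposed to produce. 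Since $M_\mu^{-1}$ grows superexponentially, $\rho_\infty\in L^1\cap L^\infty$ gives no control whatsoever on $\rho_\infty^2 M_\mu^{-1}$: the strong confinement in (V1), (V3)--(V4) makes $M_\mu$ small at infinity and hence makes $h$ potentially \emph{larger}, not smaller, so the decay you invoke works against you. To close the argument you must either impose an a priori class like the paper's $A$ (at which point your proof is the paper's, inheriting the same unverified step that $\rho_\infty\in L^1\cap L^\infty$ implies $\rho_\infty\in A$), or supply genuinely new weighted a priori estimates on $\rho_\infty$, which your sketch does not contain. Your alternative ergodic route (Lyapunov function plus hypoellipticity plus uniqueness of the invariant measure for the kinetic Langevin dynamics) is a legitimately different path, but as stated it is an appeal to standard theory rather than a proof: it additionally needs nonnegativity of $\rho_\infty$ to identify an $L^1$ solution of the stationary Kolmogorov equation with an invariant probability density, and a citation or proof of the uniqueness theorem itself.
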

Let us remark that we use the convexity at infinity of $\psi$ in order $Z_\lambda$ to be finite and $\rho_\infty$ to be in $L^1\cap L^\infty$.
\begin{proof}
The idea of the proof has appeared in \cite{Dre87}, where the authors study the Vlasov-Fokker-Planck equation but with different scaling and assumptions. The proof is divided into two steps. 

\noindent\textbf{Step 1.} We first consider the linearised equation
\begin{equation}
\label{eq: linearised equation}
\mathsf{K}(\rho):=\mathsf{K}[\mu](\rho)=0,
\end{equation}
where $\mu\in L^1(\R^{2d})$ is given. We prove the following assertion: define
\begin{equation}
\label{eq: u stationary}
u(q,p):=C_\lambda^{-1}\exp\left(-\frac{1}{\lambda}\Big(\frac{1}{2}p^2+V(q)+\psi\ast \mu(q)\Big)\right),
\end{equation}
where $C_\lambda$ is the normalisation constant so that $\|u\|_{L^1}=1$,
and
\begin{align*}
A:=&\Big\{v:\R^{2d}\rightarrow \R\Big|v(\cdot,p)\in C^{1}(\R^d)\, \forall p\in \R^d; v(q,\cdot)\in C^{2}(\R^d)\, \forall q\in \R^d; ~\text{and}
\\& \quad f:=v\cdot u^{-1/2}~ \text{satisfies}~ f\in H^1((1+|p|+|h|)\,dqdp), \Delta_p f\in L^2\Big\},
\end{align*}
where $h(q):=\nabla_q V(q)+\nabla_q\psi\ast \mu$. Then $u$ is the unique solution in $A$ of the linearised equation \eqref{eq: linearised equation}.

Note that under the assumption that $V$ and $\psi$ are smooth, the linearised operator is hypo-elliptic, see for instance \cite{Hor67} and \cite[Section 1]{ DM10}. Hence we know a priori that all solutions of Eq.\eqref{eq: linearised equation} are smooth. Therefore, all the derivatives in this proof can be understood in the classical sense.

We now prove this assertion. By the assumptions on $V$ and $\psi$, $V(q)+\psi\ast\mu(q)$ behaves like a polynomial of order $k\geq 2$  of $|q|$ at infinity. This and \eqref{eq: u stationary} imply that
\begin{align*}
&\|u^\frac{1}{2}\|^2_{L^2((1+|p|+|h|)\,dqdp)}=\int_{\R^{2d}} u(1+|p|+|h|)\,dqdp<\infty,
\\&\|\nabla u^\frac{1}{2}\|^2_{L^2((1+|p|+|h|)\,dqdp)}=\left(\frac{1}{2\lambda}\right)^2\int_{\R^{2d}} u (|p|^2+|h|^2)(1+|p|+|h|)\,dqdp<\infty,
\\&\|\Delta_p u^\frac{1}{2}\|^2_{L^2}=\left(\frac{1}{2\lambda}\right)^2\int_{\R^{2d}} u \Big(d-\frac{1}{2\lambda}|p|^2\Big)^2\,dqdp<\infty.
\end{align*}
Therefore $u\in A$. Since $-\div_q(u p)+\div_p(u(\nabla_qV+\nabla_q\psi\ast\mu))=\div_p(u p)+\lambda \Delta_p u=0$, it follows that $\mathsf{K}[\mu](u)=0$. Now assume that Eq. \eqref{eq: linearised equation} has another solution $v \in A$ and $\|v\|_{L^1}=1$. Let $f:=v\cdot u^{-1/2}$. We have
\begin{align*}
&-\div_q(v p)+\div_p(v(\nabla_q V+\nabla_q\psi\ast\mu))\\
&=u^{1/2}[-\div_q(f p)+\div_p(f(\nabla_q V+\nabla_q\psi\ast \mu))],
\end{align*}
and
\begin{align*}
\div_p\Big(v p+ \lambda \nabla_pv\Big)&=\div_p\Big(v p+\lambda\nabla_p(u\,u^{-1/2}\,f)\Big)
\\&=\div_p\Big(v p+\lambda(u\nabla_p(u^{-1/2}f)+\nabla_pu\cdot u^{-1/2}f)\Big)
\\&=\lambda\div_p\Big(u\nabla_p(u^{-1/2}f)\Big).
\end{align*}
Define $\mathsf{Q}f:=-u^{-1/2}\mathsf{K}(u^{1/2} f)=-u^{-1/2}\mathsf{K}(v)$. Then from the above calculation, we get
\begin{equation*}
\mathsf{Q}f=-[-\div_q(f p)+\div_p(f(\nabla_q V+\nabla_q\psi\ast \mu))]-\lambda u^{-1/2}\div_p\Big(u\nabla_p(u^{-1/2}f)\Big).
\end{equation*}
Therefore, by multiplying by $f$ and integrating over $\R^{2d}$, we obtain
\begin{align*}
\langle \mathsf{Q}f,f\rangle_{L^2}&=\frac{1}{2}\int_{\R^{2d}}[\div_q(p f^2)-\div_p(f^2(\nabla_q V+\nabla_q\psi\ast \mu))]\,dqdp\\
&\qquad-\lambda\int_{\R^{2d}}u^{-1/2}\div_p\Big(u\nabla_p(u^{-1/2}f)\Big) f\,dqdp
\\&=\frac{1}{2}\int_{\R^{2d}}[\div_q(p f^2)-\div_p(f^2(\nabla_q V+\nabla_q\psi\ast \mu))]\,dqdp\\
&\qquad-\lambda\int_{\R^{2d}}\div_p\Big(u^{-1/2}f[u\nabla_p(u^{-1/2}f)]\Big)dqdp
\\&\qquad+\lambda\int_{\R^{2d}}u\Big(\nabla_p(u^{-1/2}f)\Big)^2\,dqdp\\
&=\lambda\int_{\R^{2d}}u\Big(\nabla_p(u^{-1/2}f)\Big)^2\,dqdp.
\end{align*}
Note that in the above computations, as we show in Remark \ref{re: remark} that due to divergence theorem and the fact that $ v\in A$, the first three integrals vanish.

Since $\mathsf{Q}f=0$, it follows that $\nabla_p(u^{-1/2}f)=0$, i.e., $u^{-1/2} f=g(q)$ for some function $g$. Hence $v=u^{1/2}f=u\cdot g(q)$, and $0=\mathsf{K}(v)=-up\cdot\nabla_qg(q)$. It implies that $\nabla_q g(q)=0$, i.e., $g$ is a constant. Since $\|v\|_{L^1}=1$, we obtain that $g=1$, i.e. $v=u$. In other words, Eq. \eqref{eq: linearised equation} has $u$ as a unique solution in $A$ and $\|u\|_1=1$.

\noindent\textbf{Step 2.} Suppose that $\rho_\infty\in L^1\cap L^\infty$ is a solution of Eq. \eqref{eq: stationary equation}. Therefore, $\rho_\infty$ solves the equation $\mathsf{K}[\rho_\infty](\nu)=0$. According to \textbf{Step 1}, this equation has a unique solution given by
\begin{equation*}
\tilde{\nu}=Z_\lambda^{-1}\exp \left[-\frac{1}{\lambda}\Big(\frac{p^2}{2}+V(q)+ \psi\ast\rho_\infty(q)\Big)\right].
\end{equation*}
Hence $\tilde{\nu}=\rho_\infty$, i.e., $\rho_\infty$ satisfies \eqref{eq: stationary measure}. The reverse assertion is obvious due to the convexity at infinity of $V$ and of $\psi$.
\end{proof}
\begin{rmk}
\label{re: remark}
We verify here that the first three terms in $\langle\mathsf{Q}f,f\rangle_{L^2}$ vanish. We present here for the first term only, since the computations for the other ones are similar. By definition of $A$, we have
\begin{align*}
&\|pf^2\|_{L^1}=\int_{\R^{2d}}|pf^2|\,dqdp\leq \|f\|^2_{L^2((1+|p|+|h|)\,dqdp)}<\infty,
\end{align*}
and 
\begin{align*}
\|\div_q(pf^2)\|_{L^1}&=2\int_{\R^{2d}}|f||p\cdot\nabla_q f|\,dqdp\leq 2\int_{\R^{2d}}|f||p||\nabla_q f|\,dqdp
\\&\leq 2\left(\int_{\R^2d}|f|^2|p|\,dqdp\right)^\frac{1}{2}\left(\int_{\R^2d}|\nabla_q f|^2|p|\,dqdp\right)^\frac{1}{2}
\\&\leq 2\|f\|_{L^2((1+|p|+|h|)\,dqdp)}\|\nabla f\|_{L^2((1+|p|+|h|)\,dqdp)}<\infty.
\end{align*}
Therefore, by the divergence theorem, see for instance \cite[Section 4.5.2]{GGS10}, we obtain $\int_{\R^2d}\div_q(pf^2)\,dqdp=0$, which is the desired equality.
\end{rmk}
\section{Main results}
\label{sec: results}
In this section, we assume that Assumption 1 and Assumption 2 are fulfilled.

\begin{thm}
\label{virginie}
We consider a measure $\rho_\infty$ on $\bRb^d\times\bRb^d$. If it is an invariant probability for \eqref{eq: equationdimensionles} then $q\mapsto\int_{\bRb^d}\rho_\infty(q,p)dp$ is an invariant probability of 
\begin{equation}
\label{McKean-Vlasov2}
dX(t)=-\nabla V(X(t))\,dt-\nabla\psi\ast \mu_t(X(t))\,dt+\sqrt{2\lambda}dW(t),
\end{equation}
where $\mu_t$ is the law of $X(t)$.
\end{thm}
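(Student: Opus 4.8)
The plan is to combine the explicit Gibbs form furnished by Proposition~\ref{prop: presentation of invariant} with a direct verification that the momentum-marginal solves the stationary Fokker--Planck equation attached to the McKean--Vlasov dynamics \eqref{McKean-Vlasov2}. Since $\rho_\infty$ is an invariant probability it lies in $L^1\cap L^\infty$, so Proposition~\ref{prop: presentation of invariant} applies and gives the representation \eqref{eq: stationary measure}. The crucial structural observation is that this density factorises into a Maxwellian in the momentum variable times a function of the position variable:
\[
\rho_\infty(q,p)=Z_\lambda^{-1}\,e^{-\frac{p^2}{2\lambda}}\,\exp\!\left[-\frac{1}{\lambda}\big(V(q)+\psi\ast\rho_\infty(q)\big)\right].
\]

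First I would integrate out the momentum. Writing $\bar\rho_\infty(q):=\int_{\R^d}\rho_\infty(q,p)\,dp$ for the marginal, the Gaussian integral in $p$ contributes a constant factor $(2\pi\lambda)^{d/2}$, so that
\[
\bar\rho_\infty(q)=\widetilde Z_\lambda^{-1}\exp\!\left[-\frac{1}{\lambda}\big(V(q)+\psi\ast\bar\rho_\infty(q)\big)\right],
\]
with $\widetilde Z_\lambda=Z_\lambda(2\pi\lambda)^{-d/2}$. Here I use that, by the definition of the convolution as an integral over the whole phase space $\R^{2d}$, one has $\psi\ast\rho_\infty(q)=\psi\ast\bar\rho_\infty(q)$; that is, the interaction term sees only the position marginal. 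The growth and convexity-at-infinity hypotheses on $V$ and $\psi$ (Assumptions~\ref{assumption V} and~2) guarantee that $\bar\rho_\infty$ is well defined, integrable, and that $\widetilde Z_\lambda<\infty$.

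It then remains to check that a probability density $u=\bar\rho_\infty$ of this self-consistent (Gibbs) form is invariant for \eqref{McKean-Vlasov2}, equivalently that it solves the stationary equation $\div_q\big(u(\nabla V+\nabla\psi\ast u)\big)+\lambda\Delta_q u=0$ of the associated nonlinear Fokker--Planck equation. This is immediate by differentiating the logarithm: the representation yields the pointwise identity $\lambda\nabla_q u=-u\,(\nabla V+\nabla\psi\ast u)$, and taking the divergence of $u(\nabla V+\nabla\psi\ast u)+\lambda\nabla_q u=0$ produces the stationary equation at once. Alternatively one may simply invoke the characterisation of the invariant measures of the McKean--Vlasov diffusion due to Tugaut and coauthors, which states precisely that they are the solutions of this implicit Gibbs equation.

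I do not expect a deep obstacle here; the content is essentially bookkeeping. The one point requiring care is the reduction of the convolution to the position marginal, together with the integrability needed to legitimately integrate out $p$ and to keep $\widetilde Z_\lambda$ finite --- both of which are supplied by the same convexity-at-infinity hypotheses already exploited in Proposition~\ref{prop: presentation of invariant}. Matching the implicit equation for $\bar\rho_\infty$ with the one characterising McKean--Vlasov stationary states is then exactly the conclusion of the theorem.
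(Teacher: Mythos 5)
Your proposal is correct and follows essentially the same route as the paper: apply Proposition~\ref{prop: presentation of invariant}, factor the Gibbs density into a Maxwellian in $p$ times a position part, integrate out $p$, and identify the marginal as a McKean--Vlasov stationary state via the implicit Gibbs equation (the paper does this last step by citing Lemma~2.2 of Herrmann--Tugaut, which you also invoke as an alternative to your direct differentiation check). Your explicit remark that $\psi\ast\rho_\infty=\psi\ast\bar\rho_\infty$ makes precise a point the paper leaves implicit, but it is the same argument.
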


\begin{proof}
Denote by $\hat{\rho}_\infty$ the first marginal of $\rho_\infty$, i.e., $\hat{\rho}_\infty(q)=\int_{\R^d}\rho_\infty(q,p)dp$. 
Suppose that $\rho_\infty$ is an invariant measure for \eqref{eq: equationdimensionles}. According to Proposition~\ref{prop: presentation of invariant}, $\rho_\infty$ satisfies \eqref{eq: stationary measure}, i.e.,
\begin{align}
\rho_\infty(q,p)&=\frac{\exp\left[-\frac{1}{\lambda}\Big(\frac{p^2}{2}+V(q)+\psi\ast\rho_\infty(q)\Big)\right]}{\int_{\R^{2d}}\exp\left[-\frac{1}{\lambda}\Big(\frac{p^2}{2}+V(q)+\psi\ast\rho_\infty(q)\Big)\right]dqdp}\nonumber\\
&=\frac{e^{-\frac{1}{\lambda}\frac{p^2}{2}}}{\int_{\R^d}e^{-\frac{1}{\lambda}\frac{p^2}{2}}\,dp}\times\frac{\exp\left[-\frac{1}{\lambda}\Big(V(q)+\psi\ast\rho_\infty(q)\Big)\right]}{\int_{\R^d}\exp\left[-\frac{1}{\lambda}\Big(V(q)+\psi\ast\rho_\infty(q)\Big)\right]dq}
\end{align}
It follows that
\begin{align*}
\hat{\rho}_\infty(q)&=\int_{\R^d}\rho_{\infty}(q,p)\,dp
\\&=\frac{\exp\left[-\frac{1}{\lambda}\Big(V(q)+\psi\ast\rho_\infty(q)\Big)\right]}{\int_{\R^d}\exp\left[-\frac{1}{\lambda}\Big(V(q)+\psi\ast\rho_\infty(q)\Big)\right]dq}.
\end{align*}

According to \cite[Lemma 2.2]{HT1} $\hat{\rho}_\infty$ is a stationary measure of the McKean-Vlasov SDE
\begin{equation}
\label{McKean-Vlasov}
dX(t)=-\nabla V(Q(t))\,dt-\nabla\psi\ast \mu_t(X(t))\,dt+\sqrt{2\lambda}dW(t),
\end{equation}
where $\mu_t$ is the law of $X(t)$. Indeed, the convexity at infinity of $V$ and of $\psi$ provides the sufficient conditions of integrability for $\hat{\rho}_\infty$ to be an invariant probability. This concludes the proof of this theorem. Note that the forward Kolmogorov equation associated to the McKean-Vlasov SDE is given by
\begin{equation}
\label{McKean-Vlasov PDE}
\partial_t\mu_t=\div[\mu_t(\nabla V+\nabla\psi\ast\mu_t)]+\lambda\Delta \mu_t.
\end{equation}
\end{proof}
Theorem \ref{virginie} establishes a one-to-one correspondence between invariant measures of the McKean-Vlasov equation \eqref{McKean-Vlasov PDE} and that of the Vlasov-Fokker-Planck equation \eqref{eq: equationdimensionles}. We therefore can deduce various results on the existence, (non)-uniqueness and characterization for the latter from the known results obtained by Tugaut and co-authors for the former.

The first result corresponds to the existence of an invariant probability under Assumptions 1 and 2.

\begin{proposition}
\label{thm:fr:subconv}
For any $\lambda>0$, there exists a stationary measure - that is a solution of Eq. \eqref{eq: stationary equation}.
\end{proposition}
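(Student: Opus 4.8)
The plan is to run the correspondence of Theorem~\ref{virginie} in reverse: instead of passing from a Vlasov--Fokker--Planck stationary measure to a McKean--Vlasov one, I would start from a \emph{known} McKean--Vlasov stationary measure and lift it to a solution of Eq.~\eqref{eq: stationary equation}. The substantive analytic input is the existence of a stationary solution of the McKean--Vlasov equation \eqref{McKean-Vlasov PDE} under Assumptions~1 and~2, which is precisely the type of statement established by Tugaut and coauthors. So the first step is to invoke that result: for every $\lambda>0$ there is a probability density $\bar u$ on $\R^d$ solving the self-consistent equation
\begin{equation*}
\bar u(q)=\frac{\exp\left[-\frac{1}{\lambda}\big(V(q)+\psi\ast\bar u(q)\big)\right]}{\int_{\R^d}\exp\left[-\frac{1}{\lambda}\big(V(q')+\psi\ast\bar u(q')\big)\right]dq'}.
\end{equation*}

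Next I would construct the candidate Vlasov--Fokker--Planck measure by tensoring $\bar u$ with the Maxwellian in the momentum variable,
\begin{equation*}
\rho_\infty(q,p):=\frac{1}{(2\pi\lambda)^{d/2}}\,e^{-\frac{1}{\lambda}\frac{p^2}{2}}\,\bar u(q).
\end{equation*}
The key observation is that the interaction term only feels the spatial marginal: since $\int_{\R^d}\rho_\infty(q,p)\,dp=\bar u(q)$, one has $\psi\ast\rho_\infty=\psi\ast\bar u$. Substituting the self-consistent equation for $\bar u$ into the definition of $\rho_\infty$ and using this identity shows that $\rho_\infty$ is exactly of the form \eqref{eq: stationary measure}, with $Z_\lambda=(2\pi\lambda)^{d/2}\int_{\R^d}\exp[-\frac{1}{\lambda}(V+\psi\ast\bar u)]\,dq$ the product of the two normalising constants (which agrees with \eqref{eq: Z} by Fubini). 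By the converse direction of Proposition~\ref{prop: presentation of invariant}, this $\rho_\infty$ is then a stationary solution of Eq.~\eqref{eq: stationary equation}.

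It remains only to record integrability. The convexity at infinity of $V$ (Assumptions (V3) and (V4)) and of $\psi$ guarantees that $V+\psi\ast\bar u$ grows at least like a positive power of $|q|$, so the $q$-integral converges; together with the Gaussian factor in $p$ this yields $Z_\lambda<\infty$ and $\rho_\infty\in L^1\cap L^\infty$, exactly as in the remark following Proposition~\ref{prop: presentation of invariant}.

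I expect the only genuine obstacle to be imported rather than internal: proving that the McKean--Vlasov equation admits a stationary solution under precisely Assumptions~1 and~2 is the deep analytic fact, and the argument here rests entirely on citing it in the required generality. Once that is in hand the lift is essentially algebraic, and the single point deserving care is verifying that the Maxwellian tensorization leaves the convolution unchanged, which is what makes the self-consistent equation for $\bar u$ transfer verbatim to the form \eqref{eq: stationary measure} for $\rho_\infty$.
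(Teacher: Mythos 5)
Your proposal is correct and takes essentially the same route as the paper: existence is imported from Tugaut's result for the McKean--Vlasov diffusion (Proposition 3.1 of \cite{JOTP}) and transferred to the Vlasov--Fokker--Planck equation via the correspondence between the two sets of stationary measures. Your explicit Maxwellian tensorization together with the converse direction of Proposition~\ref{prop: presentation of invariant} simply spells out the reverse of the map in Theorem~\ref{virginie}, which the paper leaves implicit in its one-line citation, so your write-up is, if anything, more detailed than the paper's own argument.
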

This is a consequence of Proposition 3.1 in \cite{JOTP}. The idea is to consider the free-energy functional. By using some compactness argument, we are able to prove that a subsequence of the law of $X(t)$ admits an adherence value which is an invariant probability, where $X$ is the McKean-Vlasov diffusion. Then, it proves the existence of such an invariant probability.

Next results do describe the invariant probabilities.

First one deals with the case in which both $V$ and $\psi$ are even functions. In this case, by using Schauder fixed point theorem, we are able to obtain the existence of an invariant probability whose density with respect to the Lebesgue measure is even.

\begin{thm}
If both $V$ and $\psi$ are even, there exists a symmetric invariant probability (that is a measure whose density with respect to the Lebesgue measure is even).
\end{thm}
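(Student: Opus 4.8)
The plan is to realize a symmetric invariant probability as an \emph{even fixed point} of the self-consistency map attached to the McKean-Vlasov equation, and then to produce that fixed point by Schauder's theorem. First I would invoke Proposition~\ref{prop: presentation of invariant}: any stationary $\rho_\infty$ has the Gibbs form \eqref{eq: stationary measure}, in which the $p$-dependence is the Gaussian $e^{-p^2/(2\lambda)}$ and is therefore automatically even in $p$. Hence $\rho_\infty$ is even in $(q,p)$ as soon as its spatial marginal $\hat\rho_\infty(q)=\int_{\R^d}\rho_\infty(q,p)\,dp$ is even, and by the computation in the proof of Theorem~\ref{virginie} this marginal solves the fixed-point equation $\hat\rho=T(\hat\rho)$, where
\[
T(\rho)(q):=\frac{\exp\!\left[-\tfrac1\lambda\big(V(q)+\psi\ast\rho(q)\big)\right]}{\int_{\R^d}\exp\!\left[-\tfrac1\lambda\big(V(q')+\psi\ast\rho(q')\big)\right]dq'},
\]
with $\psi\ast\rho$ depending on $\rho$ only through its $q$-marginal. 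Thus it suffices to find an even probability density fixed by $T$.

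Second, I would set up the fixed-point space. Let $C$ be a convex set of even probability densities on $\R^d$, carrying either the $L^1$-norm topology or the narrow topology on measures. The key algebraic observation is that $T$ preserves evenness: if $\rho$ is even then, since $V$ and $\psi$ are even, the substitution $q'\mapsto -q'$ gives $(\psi\ast\rho)(-q)=(\psi\ast\rho)(q)$, so $V+\psi\ast\rho$ is even and hence so is $T(\rho)$. Therefore $T$ restricts to a self-map of the even class, and any fixed point found inside $C$ is automatically the desired symmetric density.

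Third, I would verify the hypotheses of Schauder's theorem: continuity of $T$ and (relative) compactness of its image. Continuity follows from dominated convergence once moments of order slightly above $2n=\deg G$ are controlled, so that $\psi\ast\rho$ passes to the limit. For compactness I would exploit two structural facts. Assumptions~($\psi$2) and ($\psi$3) force $\psi=G(|\cdot|)\ge G(0)=0$, so $\psi\ast\rho\ge0$ and hence $T(\rho)(q)\le Z(\rho)^{-1}e^{-V(q)/\lambda}$; combined with the coercivity $V(q)\ge C_4|q|^4-C_2|q|^2$ from (V3) this yields a uniform, super-Gaussian tail bound and therefore tightness (equivalently, uniform integrability of the tails in $L^1$). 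An equicontinuity estimate, or the explicit Gibbs form together with the Fr\'echet-Kolmogorov criterion, then upgrades tightness to relative compactness in $L^1$, after which Schauder produces an even fixed point and completes the proof.

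The hard part will be making $T$ a genuine self-map of a tight convex set, which reduces to a uniform lower bound on the partition function $Z(\rho)$ over $C$. The crude bound $\psi\ast\rho(q)\le C\big(1+\int|q'|^{2n}\rho\,dq'\big)$ on compact sets shows that any lower bound for $Z(\rho)$ degrades exponentially in the moments of $\rho$, whereas the induced control on the moments of $T(\rho)$ degrades only polynomially; closing this loop is the delicate step, and the naive moment-bounded or dominated sets only close under a smallness condition on the constants. I expect to resolve it by exploiting the convexity of $G$, so that $\psi\ast\rho$ is convex and, for even $\rho$, attains its minimum at the origin, yielding sharper two-sided control of $\psi\ast\rho$ and thereby trapping $T$ inside a tight, convex, $T$-invariant set on which Schauder applies.
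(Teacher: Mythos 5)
Your overall route---factor the Gaussian out of the $p$-variable via Proposition~\ref{prop: presentation of invariant}, reduce to the self-consistency map $T$ on the spatial marginal, check that $T$ preserves evenness when $V$ and $\psi$ are even, and then apply Schauder---is precisely the strategy behind the paper's proof: through Theorem~\ref{virginie} and the product structure of \eqref{eq: stationary measure}, the paper reduces the statement to the existence of a symmetric stationary measure for the McKean--Vlasov equation and invokes Theorem~4.5 of \cite{HT1}, which is itself a Schauder fixed-point argument for exactly this map. So your reduction and your choice of tool are the right ones (including the implicit use of the ``conversely'' part of Proposition~\ref{prop: presentation of invariant} to pass from an even fixed point of $T$ back to an invariant measure of \eqref{eq: equationdimensionles}).

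The genuine gap is the one you flag yourself at the end: you never construct a tight, convex, $T$-invariant set, equivalently you never prove a lower bound on the partition function $Z(\rho)$ that is uniform over the candidate set. Without it, the bound $T(\rho)(q)\le Z(\rho)^{-1}e^{-V(q)/\lambda}$ in your third step is \emph{not} uniform, so neither the tightness nor the $L^1$-compactness of $T(C)$ is established, and Schauder's hypotheses remain unverified; your own accounting shows the naive closure fails (the lower bound on $Z(\rho)$ deteriorates exponentially in the $2n$-th moment of $\rho$, and since the moments of $T(\rho)$ are controlled by $M_k/Z(\rho)$, the loop does not close). The remedy you propose---convexity of $G$, so that $\psi\ast\rho$ is convex and, for even $\rho$, minimized at the origin---is a true fact but does not by itself break the circularity: after the harmless renormalization that subtracts the constant $\psi\ast\rho(0)$ from the exponent (which leaves $T(\rho)$ unchanged), the quantity you must bound from above near $q=0$ is $\psi\ast\rho(q)-\psi\ast\rho(0)$, and this is still controlled only through moments of $\rho$ of order up to $2n-1$, so the same circle reappears. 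Symmetry gives you more than you use---for instance, an even $\rho$ has zero mean, so Jensen's inequality yields the $\rho$-independent envelope $\psi\ast\rho\ge\psi$ for the numerator, and all odd moments of $\rho$ vanish, which drastically reduces the parameters entering $\psi\ast\rho$ when $\psi$ is polynomial---and it is by exploiting the symmetric structure in this stronger way that the fixed-point set is actually closed in the cited proof. As written, your argument establishes the (correct) reduction but leaves unproven exactly the content of \cite[Theorem~4.5]{HT1}, which is the substance of the result.
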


This is a consequence of Theorem 4.5 in \cite{HT1}.

Next proposition establishes that there is an invariant probability around each well, providing an additive condition on the potentials.

\begin{proposition}
\label{prop:gr:lin:exc}
Here, $d=1$. We assume that the interacting potential $\psi$ is quadratic: $\psi(x):=\frac{\alpha}{2}x^2$. Let $a_0$ be a critical point of $V$ such that $\alpha+V''(a_0)>0$ and
\begin{eqnarray}
\label{eq:gr:lin:epscrit}
\alpha>2\,\sup_{x\neq a_0}\frac{V(a_0)-V(x)}{\left(a_0-x\right)^2}\,.
\end{eqnarray}
Thus, for all $\delta\in]0\,;\,1[$, there exists $\lambda_0>0$ such that for all $\lambda\leq\lambda_0$, Diffusion \eqref{eq: equationdimensionles} admits an invariant probability $\rho_\infty$ satisfying
\begin{eqnarray*}
\left|\int_\bRb\int_\bRb q\rho_\infty(q,p)dqdp-a_0+\frac{V^{(3)}(a_0)}{4V''(a_0)\left(\alpha+V''(a_0)\right)}\,\lambda\right|\leq\delta\,\lambda\,.
\end{eqnarray*}
\end{proposition}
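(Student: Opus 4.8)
The plan is to reduce everything to a statement about the first moment of the McKean--Vlasov invariant measure and then to quote the corresponding asymptotic theorem of Tugaut. The starting point is that, by Proposition~\ref{prop: presentation of invariant}, every invariant probability $\rho_\infty$ of \eqref{eq: equationdimensionles} factorises into a centred Gaussian in $p$ times its $q$-marginal,
\begin{equation*}
\rho_\infty(q,p)=\frac{e^{-p^{2}/(2\lambda)}}{\int_{\bRb}e^{-p^{2}/(2\lambda)}\,dp}\;\hat\rho_\infty(q),\qquad \hat\rho_\infty(q):=\int_{\bRb}\rho_\infty(q,p)\,dp .
\end{equation*}
Integrating out the Gaussian factor, the quantity in the proposition is simply
\begin{equation*}
\int_{\bRb}\int_{\bRb}q\,\rho_\infty(q,p)\,dq\,dp=\int_{\bRb}q\,\hat\rho_\infty(q)\,dq ,
\end{equation*}
the first moment of $\hat\rho_\infty$. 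By Theorem~\ref{virginie} $\hat\rho_\infty$ is an invariant probability of the McKean--Vlasov diffusion \eqref{McKean-Vlasov2}, and the converse statement in Proposition~\ref{prop: presentation of invariant} lifts any such $\hat\rho_\infty$ back to a VFP invariant measure with the same $q$-mean. It therefore suffices to construct a McKean--Vlasov invariant probability localised near $a_0$ and to expand its mean.

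Next I would recast the hypotheses in terms of an effective potential. Since $\psi(x)=\frac{\alpha}{2}x^{2}$, the convolution $\psi\ast\hat\rho_\infty(q)$ equals $\frac{\alpha}{2}(q-m)^{2}$ up to an additive constant, where $m:=\int_{\bRb}q\,\hat\rho_\infty\,dq$; hence the self-consistent equation for $\hat\rho_\infty$ reads $\hat\rho_\infty\propto\exp[-\lambda^{-1}U_m]$ with $U_m(q):=V(q)+\frac{\alpha}{2}(q-m)^{2}$. Condition \eqref{eq:gr:lin:epscrit} is exactly equivalent to
\begin{equation*}
V(x)+\tfrac{\alpha}{2}(x-a_0)^{2}>V(a_0)\qquad\text{for every }x\neq a_0 ,
\end{equation*}
i.e.\ $U_{a_0}$ has $a_0$ as its unique strict global minimiser, while $\alpha+V''(a_0)>0$ renders this minimiser non-degenerate. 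By continuity these two properties survive for $U_m$ whenever $m$ stays in a neighbourhood of $a_0$, which is precisely the setting in which Tugaut's localisation results guarantee, for all sufficiently small $\lambda$, a McKean--Vlasov invariant probability concentrated around $a_0$.

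The core computation is the expansion of the mean, which I would carry out by Laplace's method combined with a fixed-point argument. For a frozen value of $m$ the minimiser $q^{\ast}(m)$ of $U_m$ solves $V'(q^{\ast})+\alpha\bigl(q^{\ast}-m\bigr)=0$, and a standard Laplace expansion of the first moment of $\propto e^{-U_m/\lambda}$ gives
\begin{equation*}
\frac{\int_{\bRb}q\,e^{-U_m(q)/\lambda}\,dq}{\int_{\bRb}e^{-U_m(q)/\lambda}\,dq}
= q^{\ast}(m)-\frac{\lambda\,U_m'''(q^{\ast})}{2\,U_m''(q^{\ast})^{2}}+O(\lambda^{2}),
\end{equation*}
where $U_m''=V''+\alpha$ and $U_m'''=V'''$. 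Imposing self-consistency turns this into a scalar equation $m=M(m,\lambda)$ whose $\lambda=0$ solution is the critical point $a_0$. Since $1-\partial_m M(a_0,0)=V''(a_0)/(\alpha+V''(a_0))$ is nonzero (the stated formula itself presupposes $V''(a_0)\neq0$), the implicit function theorem yields a unique branch $m(\lambda)=a_0+O(\lambda)$, and differentiating the fixed-point relation at $\lambda=0$ produces a leading correction of order $\lambda$ whose coefficient is a constant multiple of $V^{(3)}(a_0)/\bigl(V''(a_0)(\alpha+V''(a_0))\bigr)$. The precise numerical constant and the quantitative remainder $\delta\lambda$ are exactly what Tugaut's theorem supplies, so at this point I would simply cite it.

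The step I expect to be the real obstacle is the self-consistency itself: because $m$ is the mean of the very measure it parametrises, neither the existence of the localised branch nor the expansion of its mean reduces to a single Laplace integral. One must control the fixed-point map $M(\cdot,\lambda)$ uniformly as $\lambda\to0$ -- in particular ruling out that the invariant measure escapes towards another well or fails to concentrate -- and it is this uniform control, valid under \eqref{eq:gr:lin:epscrit} and $\alpha+V''(a_0)>0$, that is the substantive content borrowed from Tugaut. Everything else is the bookkeeping of transporting his McKean--Vlasov statement back to the VFP first moment through the factorisation of the first paragraph.
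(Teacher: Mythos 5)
Your proposal is correct and follows essentially the same route as the paper: the paper likewise transfers the statement to the McKean--Vlasov diffusion via the product structure of Proposition~\ref{prop: presentation of invariant} and Theorem~\ref{virginie}, observes that \eqref{eq:gr:lin:epscrit} is equivalent to $V(x)+\frac{\alpha}{2}(x-a_0)^2>V(a_0)$ for all $x\neq a_0$, and then simply cites Proposition 1.2 of \cite{PT} for the substantive small-$\lambda$ localisation and moment expansion. Your Laplace/fixed-point sketch is additional detail the paper does not supply; the one caution is that when invoking \cite{PT} one must translate Tugaut's noise convention ($\sqrt{\epsilon}\,dW$, stationary density $\propto e^{-2U/\epsilon}$, so $\epsilon=2\lambda$ here) into the present normalisation --- indeed your own heuristic with density $\propto e^{-U_m/\lambda}$ produces the coefficient $\frac{V^{(3)}(a_0)}{2V''(a_0)(\alpha+V''(a_0))}$ rather than the stated $\frac{V^{(3)}(a_0)}{4V''(a_0)(\alpha+V''(a_0))}$, a factor-2 bookkeeping point that your deferral to the reference quietly absorbs.
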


This is a consequence of Proposition 1.2 in \cite{PT}.

Let us explain briefly what Condition \eqref{eq:gr:lin:epscrit} means. When the temperature is small, each stationary measure which is not symmetric concentrates to a Dirac measure around a critical point. However, the well needs to be sufficiently deep in the following sense: for any $x\neq a_0$, one needs that $V(x)+\frac{\alpha}{2}\left(x-a_0\right)^2>V(a_0)$, which is exactly equivalent to Condition \eqref{eq:gr:lin:epscrit}.

In the following theorem, we take assumptions close to the ones in Proposition \ref{prop:gr:lin:exc}. Indeed, the interacting potential $\psi$ is assumed to be quadratic. However, by using some convexity assumptions on the derivatives of $V$, we will show that there are exactly one or three invariant probabilities. Moreover, we can simulate if we are in the uniqueness or in the thirdness case.

\begin{thm}
\label{thm:UT}
Here, $d=1$. We assume that
\begin{align}
\label{yamcha}
V(x)&=-\frac{\left|V''(0)\right|}{2}x^2+\sum_{p=2}^{q}\frac{\left|V^{(2p)}(0)\right|}{(2p)!}x^{2p}\,\,\,\mbox{with}\,\,\deg(V)=:2q\,.
\end{align}
And, $\psi(x):=\frac{\alpha}{2}x^2$. Thus, there exists $\lambda_c>0$ such that:
\begin{itemize}
 \item For all $\lambda\geq\lambda_c$, Diffusion \eqref{eq: equationdimensionles} admits a unique invariant probability, which is symmetric.
 \item For all $\lambda<\lambda_c$, Diffusion \eqref{eq: equationdimensionles} admits exactly three invariant probabilities.
\end{itemize}
Moreover, $\lambda_c$ is the unique solution of the equation, where  $z$ is the unknown,
\begin{eqnarray}
\label{eq:thm:UT}
\displaystyle\int_{\bRb_+} \left(4y^2-\frac{1}{2\alpha}\right)e^{\left(\left|V''(0)\right|-\alpha\right)4y^2-\sum_{p=2}^{q}\frac{2z^{p-1}\left|V^{(2p)}(0)\right|}{(2p)!}2^{2p}y^{2p}}dy=0\,.
\end{eqnarray}
\end{thm}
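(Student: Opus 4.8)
The plan is to push the whole problem onto the McKean--Vlasov diffusion \eqref{McKean-Vlasov} and then to reduce the counting of invariant probabilities to a single scalar self-consistency equation. By Theorem \ref{virginie} together with Proposition \ref{prop: presentation of invariant}, an invariant probability $\rho_\infty$ of \eqref{eq: equationdimensionles} is completely determined by its $q$-marginal $\hat\rho_\infty$, which is an invariant probability of \eqref{McKean-Vlasov}. Since $\psi(x)=\frac{\alpha}{2}x^2$, one has $\nabla_q\psi\ast\hat\rho_\infty(q)=\alpha(q-\theta)$ with $\theta:=\int_\R q\,\hat\rho_\infty(q)\,dq$, so that, the additive constant being absorbed into the normalisation,
\[
\hat\rho_\infty(q)\ \propto\ \exp\!\left[-\frac{1}{\lambda}\Big(V(q)+\tfrac{\alpha}{2}q^2-\alpha\theta q\Big)\right].
\]
Thus every invariant probability corresponds to a fixed point $\theta=F_\lambda(\theta)$ of the self-consistency map
\[
F_\lambda(\theta):=\frac{\int_\R q\,\exp\!\big[-\tfrac{1}{\lambda}(V(q)+\tfrac{\alpha}{2}q^2-\alpha\theta q)\big]\,dq}{\int_\R \exp\!\big[-\tfrac{1}{\lambda}(V(q)+\tfrac{\alpha}{2}q^2-\alpha\theta q)\big]\,dq},
\]
the convexity at infinity of $V$ and $\psi$ ensuring that every such $\theta$ produces a genuine probability density. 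Counting invariant probabilities is therefore the same as counting solutions of $\theta=F_\lambda(\theta)$.

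I would next exploit symmetry. Because every term in \eqref{yamcha} is even, $V$ is even and $F_\lambda$ is an odd function; hence $\theta=0$ is always a solution, giving the symmetric invariant probability, and any other solutions occur in pairs $\pm\theta^\ast$. Consequently the claimed dichotomy is exactly the statement that $\theta=F_\lambda(\theta)$ has only the trivial solution for $\lambda\ge\lambda_c$ and precisely one nontrivial pair $\pm\theta^\ast$ for $\lambda<\lambda_c$.

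The bifurcation is located by linearising at the origin. Viewing $F_\lambda(\theta)$ as the mean of $q$ under the tilted Gibbs measure $\propto\exp[-\tfrac1\lambda(V+\tfrac\alpha2 q^2-\alpha\theta q)]$, the exponential-family identity gives
\[
F_\lambda'(0)=\frac{\alpha}{\lambda}\,\mathrm{Var}_{\nu_\lambda}(q),\qquad \nu_\lambda(dq)\ \propto\ \exp\!\big[-\tfrac1\lambda\big(V(q)+\tfrac\alpha2 q^2\big)\big]\,dq.
\]
As $\lambda\to\infty$ the measure $\nu_\lambda$ spreads out on a scale growing slower than $\lambda$, so $F_\lambda'(0)\to0<1$; as $\lambda\to0$ the Laplace concentration of $\nu_\lambda$ at the minima of $V(q)+\tfrac\alpha2 q^2$ forces $F_\lambda'(0)>1$. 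The critical value is the crossing $F_{\lambda_c}'(0)=1$, i.e.
\[
\int_\R\big(\alpha q^2-\lambda_c\big)\,\exp\!\big[-\tfrac1{\lambda_c}\big(V(q)+\tfrac\alpha2 q^2\big)\big]\,dq=0,
\]
and substituting the explicit polynomial \eqref{yamcha} and rescaling $q=2\sqrt{2\lambda_c}\,y$ turns this identity, after using evenness, into equation \eqref{eq:thm:UT}.

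The genuinely hard part is to upgrade this local bifurcation to the global count asserted in the theorem: one must show that (i) $\lambda\mapsto F_\lambda'(0)$ crosses the level $1$ exactly once, so that $\lambda_c$ is well defined and \eqref{eq:thm:UT} has a unique root, and (ii) for $\lambda<\lambda_c$ there is exactly one positive fixed point $\theta^\ast$ (hence exactly three invariant probabilities), with none for $\lambda\ge\lambda_c$. Both points require strict monotonicity and concavity properties of the self-consistency map, and this is precisely where the sign structure of \eqref{yamcha}---a single negative quadratic coefficient together with nonnegative higher even coefficients, which renders $V$ convex at infinity with $V''$ convex---is decisive. Establishing this global picture is the content of the phase-transition analysis of Tugaut and coauthors, from which the statement follows.
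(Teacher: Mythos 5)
Your proposal is, in substance, the paper's own argument: the paper proves Theorem~\ref{thm:UT} by passing through the correspondence of Theorem~\ref{virginie} (plus Proposition~\ref{prop: presentation of invariant}) and then citing Theorem~2.1 of \cite{PT}, whose mechanism is exactly your reduction --- since $\psi$ is quadratic, every invariant probability is parametrised by its first moment, and counting invariant probabilities amounts to counting zeros of the real function $\theta\mapsto F_\lambda(\theta)-\theta$ (the paper's ``zeros of a function from $\mathbb{R}$ to $\mathbb{R}$''), with the global count for $\lambda<\lambda_c$ (the map increasing then decreasing on $\bRb_+$, hence exactly three zeros) deferred to \cite{PT}, just as you defer it. Your symmetry observation, the identity $F_\lambda'(0)=\frac{\alpha}{\lambda}\mathrm{Var}_{\nu_\lambda}(q)$, and the crossing condition $\int_{\bRb}\bigl(\alpha q^2-\lambda_c\bigr)\exp\bigl[-\frac{1}{\lambda_c}\bigl(V(q)+\frac{\alpha}{2}q^2\bigr)\bigr]dq=0$ are all correct.

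One concrete discrepancy in the only computation you add beyond the citation: the substitution $q=2\sqrt{2\lambda_c}\,y$ turns your crossing condition into \eqref{eq:thm:UT} with $z=2\lambda_c$, not $z=\lambda_c$. Indeed, $q^2=8\lambda_c y^2$ gives the quadratic exponent $(|V''(0)|-\alpha)4y^2$ and the prefactor $\alpha q^2-\lambda_c=2\alpha\lambda_c\bigl(4y^2-\frac{1}{2\alpha}\bigr)$ exactly as in \eqref{eq:thm:UT}, but the higher-order terms produce $\frac{|V^{(2p)}(0)|}{(2p)!}\,8^p\lambda_c^{p-1}y^{2p}=\frac{2(2\lambda_c)^{p-1}|V^{(2p)}(0)|}{(2p)!}\,2^{2p}y^{2p}$, forcing $z=2\lambda_c$; no other scaling of $q$ can reconcile prefactor, quadratic term and higher terms simultaneously with $z=\lambda_c$. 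In other words, the root of \eqref{eq:thm:UT} is $\sigma_c^2=2\lambda_c$ in the normalisation $\sigma=\sqrt{2\lambda}$ of the noise in \eqref{McKean-Vlasov2}, which is how the equation appears in \cite{PT}. So your derivation is sound and in fact exposes a factor-of-two slip in the theorem's statement as translated from \cite{PT}; as written, your final sentence identifying the rescaled identity with \eqref{eq:thm:UT} for the unknown $\lambda_c$ should be corrected to $z=2\lambda_c$ (or the statement of \eqref{eq:thm:UT} adjusted accordingly).
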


This is a consequence of Theorem 2.1 in \cite{PT}. The global idea is the following. The study of the invariant probabilities is equivalent to the study of the zeros of a function from $\mathbb{R}$ to $\mathbb{R}$. The behaviour of the function does depend on the value $\lambda$. If $\lambda\geq\lambda_c$, this function has a unique zero and if $\lambda<\lambda_c$, the function is increasing then decreasing on $\bRb_+$ so it admits three zeros.

A similar idea is used to obtain the following result.

\begin{proposition}
\label{prop:UT:grandepsilon}
Here, $d=1$. We assume that $\psi$ is quadratic: $\psi(x):=\frac{\alpha}{2}x^2$.\\
Thus, for any $\alpha\geq0$, there exists a critical value $\lambda_0(\alpha)$ such that Diffusion \eqref{eq: equationdimensionles} admits a unique invariant probability provided that $\lambda>\lambda_0(\alpha)$.
\end{proposition}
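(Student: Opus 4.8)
The plan is to use the same scalar reduction that underlies Theorem~\ref{thm:UT}, but to exploit a large diffusion coefficient to force strict monotonicity of the self-consistency map rather than to count zeros. First I would invoke Theorem~\ref{virginie}: since invariant probabilities of \eqref{eq: equationdimensionles} are in one-to-one correspondence with the stationary measures of the McKean--Vlasov dynamics, it suffices to establish uniqueness of the latter. By Proposition~\ref{prop: presentation of invariant} any such stationary marginal satisfies $\hat\rho_\infty(q)\propto\exp[-\tfrac1\lambda(V(q)+\psi\ast\hat\rho_\infty(q))]$. With $d=1$ and $\psi(x)=\tfrac\alpha2x^2$, expanding the convolution gives $\psi\ast\hat\rho_\infty(q)=\tfrac\alpha2q^2-\alpha\theta\,q+c$, where $\theta:=\int q\,\hat\rho_\infty(q)\,dq$ and $c$ is a $q$-independent constant absorbed into the normalisation. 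Hence every stationary marginal belongs to the one-parameter family
\[
\mu_\theta(q):=\frac{1}{Z(\theta)}\exp\Big[-\tfrac1\lambda\big(V(q)+\tfrac\alpha2q^2-\alpha\theta\,q\big)\Big],
\]
and self-consistency forces $\int q\,\mu_\theta(q)\,dq=\theta$. Writing $F(\theta):=\int q\,\mu_\theta(q)\,dq$, invariant probabilities therefore correspond bijectively to the zeros of $g(\theta):=F(\theta)-\theta$.

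Next I would reduce uniqueness to showing that $g$ has a single zero. Differentiating under the integral sign (all moments are finite by the growth of $V$) yields the standard covariance identity
\[
F'(\theta)=\frac{\alpha}{\lambda}\,\mathrm{Var}_{\mu_\theta}(q),
\qquad\text{so that}\qquad
g'(\theta)=\frac{\alpha}{\lambda}\,\mathrm{Var}_{\mu_\theta}(q)-1 .
\]
If I can show $\tfrac\alpha\lambda\mathrm{Var}_{\mu_\theta}(q)<1$ for \emph{all} $\theta$ once $\lambda$ is large, then $g$ is strictly decreasing. Combined with its behaviour at infinity this gives a unique zero: the minimiser $q_\ast(\theta)$ of the effective potential $W_\theta(q)=V(q)+\tfrac\alpha2q^2-\alpha\theta q$ solves $V'(q)+\alpha q=\alpha\theta$, and since $V'$ grows like $|q|^{2m-1}$ by (V1), one has $q_\ast(\theta)=O(\theta^{1/(2m-1)})$, whence $F(\theta)=o(\theta)$ and $g(\theta)\to\mp\infty$ as $\theta\to\pm\infty$. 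A strictly decreasing function running from $+\infty$ to $-\infty$ has exactly one zero, giving a unique invariant probability; existence itself is already guaranteed by Proposition~\ref{thm:fr:subconv}.

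The main obstacle — and the only place the growth hypotheses on $V$ really enter — is the uniform-in-$\theta$ variance bound. By (V1) and (V3) the potential grows at infinity like $C_{2m}|q|^{2m}$ with $2m\geq4$, so the tails of $\mu_\theta$ decay like $\exp(-C_{2m}|q|^{2m}/\lambda)$; a rescaling $q=\lambda^{1/(2m)}y$ then shows $\mathrm{Var}_{\mu_\theta}(q)\leq C\,\lambda^{1/m}$ with $C$ independent of $\theta$. Consequently
\[
\frac{\alpha}{\lambda}\,\mathrm{Var}_{\mu_\theta}(q)\leq C\,\alpha\,\lambda^{\,1/m-1}\xrightarrow[\lambda\to\infty]{}0,
\]
because $1/m-1\leq-\tfrac12<0$, so there is $\lambda_0(\alpha)$ with $g'(\theta)<0$ for every $\theta$ as soon as $\lambda>\lambda_0(\alpha)$. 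Making the uniformity in $\theta$ fully rigorous is the delicate point: the linear tilt $-\alpha\theta q$ only recentres the measure near $q_\ast(\theta)$, where the curvature $W_\theta''=V''+\alpha\sim|q_\ast|^{2m-2}$ grows, so large $|\theta|$ actually \emph{decreases} the spread and the supremum over $\theta$ is attained on a bounded set. I expect the cleanest rigorous route to be a Brascamp--Lieb type estimate on the region where $W_\theta''=V''+\alpha>0$ — which by (V4) is the complement of the fixed compact set $K$ — together with a separate, $\theta$-uniform bound of the contribution from $K$; reconciling these two estimates uniformly in $\theta$ is the step requiring the most care.
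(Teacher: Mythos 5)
Your overall route is sound and is, in substance, the same one the paper relies on: the paper offers no internal argument for Proposition~\ref{prop:UT:grandepsilon} at all --- after the transfer principle of Theorem~\ref{virginie} it simply cites Proposition 2.4 of \cite{PT}, whose method (as the paper itself describes for Theorem~\ref{thm:UT}) is precisely the scalar reduction you rebuild: parametrize candidate stationary marginals by their mean $\theta$ and count the zeros of $g(\theta)=F(\theta)-\theta$. Your skeleton is correct. The factorization in Proposition~\ref{prop: presentation of invariant} legitimates passing to the $q$-marginal and back; the covariance identity $F'(\theta)=\frac{\alpha}{\lambda}\mathrm{Var}_{\mu_\theta}(q)$ is right; $m\geq 2$ does follow from (V3), so the exponent $1/m-1\leq-\tfrac12$ is correct; and once $g'<0$ everywhere, the asymptotics $g(\pm\infty)=\mp\infty$ are not even needed, since at least one zero already exists by Proposition~\ref{thm:fr:subconv} combined with Theorem~\ref{virginie}, and a strictly decreasing function has at most one.

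The genuine gap is the one you flag yourself: the uniform-in-$\theta$ variance bound, and as written two details would not survive scrutiny. First, the rescaling $q=\lambda^{1/(2m)}y$ alone does not yield a constant independent of $\theta$, because the tilt $-\alpha\theta q$ rescales too; second, a ``Brascamp--Lieb estimate on the region where $W_\theta''>0$'' is not a licit operation, since Brascamp--Lieb requires a globally log-concave reference measure. Both are fixable, and more cheaply than your closing paragraph suggests, via an observation you almost make: the tilt is \emph{linear}, so $W_\theta''=V''+\alpha$ does not depend on $\theta$ at all. Hence one convexifies once and for all: choose a convex $\widetilde U$ with $\widetilde U=V+\frac{\alpha}{2}q^2$ outside the fixed compact $K$ of (V4), $\widetilde U''\geq c>0$, and $\phi:=V+\frac{\alpha}{2}q^2-\widetilde U$ bounded; the normalized density ratio between $\mu_\theta$ and the log-concave $\widetilde\mu_\theta\propto e^{-(\widetilde U-\alpha\theta q)/\lambda}$ is then bounded by $e^{2\operatorname{osc}(\phi)/\lambda}$ \emph{uniformly in} $\theta$, and Brascamp--Lieb gives $\mathrm{Var}_{\widetilde\mu_\theta}(q)\leq\lambda\,E_{\widetilde\mu_\theta}\bigl[1/\widetilde U''\bigr]$. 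Moreover you do not need the sharp rate $\lambda^{1/m}$: any bound $\sup_\theta\mathrm{Var}_{\mu_\theta}(q)=o(\lambda)$ closes the argument, and for this it suffices that $\sup_\theta E_{\widetilde\mu_\theta}[1/\widetilde U'']\to0$ as $\lambda\to\infty$. Since $\widetilde U''\to\infty$ at infinity, this reduces to $\sup_\theta\widetilde\mu_\theta(\{|q|\leq R\})\to0$ for each fixed $R$, which holds because, by the polynomial growth in (V1), near the minimizer of the tilted potential there is always an interval of length of order $\lambda^{1/(2m)}$ on which the potential exceeds its minimum by at most $\lambda$, so a fixed window carries vanishing mass. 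With that step completed, your argument is a correct, self-contained substitute for the citation, following the same strategy as \cite{PT}.
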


This is a consequence of Proposition 2.4 in \cite{PT}. In Proposition \ref{prop:UT:grandepsilon}, we provide a setting such that there is the uniqueness of the invariant probability. The main difference with Theorem \ref{thm:UT} is that we do not assume that the derivative of $V$ is convex.

\begin{thm}
\label{thm:mesures:excentrees}
Let $a_0$ be a point where $V$ admits a local minimum such that
\begin{eqnarray}
\label{eq:thm:mesures:excentrees}
V(x)+\psi(x-a_0)>V(a_0)\quad\mbox{for all}\quad x\neq a_0\,.
\end{eqnarray}
Then, for all $\kappa>0$ small enough, there exists $\lambda_0>0$ such that $\forall \lambda\in]0;\lambda_0[$, the diffusion \eqref{eq: equationdimensionles} admits a stationary measure $\rho_\infty$ satisfying
\begin{equation*}
\int_{\bRb^d}\int_{\mathbb{R}^d}\left|\left| q-a_0\right|\right|^{2n}\rho_\infty(q,p)dqdp\leq\kappa^{2n}\,.
\end{equation*}
\end{thm}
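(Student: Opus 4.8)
The plan is to leverage Theorem~\ref{virginie}, which reduces the study of invariant probabilities of the Vlasov-Fokker-Planck equation to that of the McKean-Vlasov diffusion \eqref{McKean-Vlasov PDE}. Specifically, if $\rho_\infty$ is an invariant probability of \eqref{eq: equationdimensionles}, then its first marginal $\hat{\rho}_\infty(q)=\int_{\mathbb{R}^d}\rho_\infty(q,p)\,dp$ is an invariant probability of the McKean-Vlasov equation. Crucially, the Gaussian factorisation in the proof of Theorem~\ref{virginie} shows that $\rho_\infty$ splits as a product of a (fixed, normalised) Gaussian in $p$ and the marginal $\hat{\rho}_\infty(q)$. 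This means that moments in $q$ of $\rho_\infty$ coincide exactly with the corresponding moments of $\hat{\rho}_\infty$; in particular
\[
\int_{\mathbb{R}^d}\int_{\mathbb{R}^d}\|q-a_0\|^{2n}\rho_\infty(q,p)\,dq\,dp=\int_{\mathbb{R}^d}\|q-a_0\|^{2n}\hat{\rho}_\infty(q)\,dq,
\]
since the $p$-marginal integrates to one and the integrand does not depend on $p$.

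Next I would invoke the cited result for the McKean-Vlasov diffusion, namely Proposition 1.2 (or the relevant excentred-measure result) in \cite{PT}, together with the results of Tugaut and coauthors on the localisation of invariant measures around a deep well. The hypothesis \eqref{eq:thm:mesures:excentrees}, that $V(x)+\psi(x-a_0)>V(a_0)$ for all $x\neq a_0$, is precisely the condition ensuring that the effective potential has a strict global structure forcing concentration of the low-temperature invariant measure near $a_0$. The statement to quote guarantees that for $\kappa>0$ small enough, there is $\lambda_0>0$ such that for $\lambda\in\,]0;\lambda_0[$ the McKean-Vlasov diffusion admits a stationary measure $\hat{\rho}_\infty$ with $\int_{\mathbb{R}^d}\|q-a_0\|^{2n}\hat{\rho}_\infty(q)\,dq\leq\kappa^{2n}$. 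Lifting this back through the factorisation and the one-to-one correspondence of Theorem~\ref{virginie} yields the corresponding invariant probability $\rho_\infty$ of \eqref{eq: equationdimensionles} with the claimed moment bound.

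The main obstacle is verifying that the correspondence in Theorem~\ref{virginie} is genuinely a bijection at the level of constructing $\rho_\infty$ from $\hat{\rho}_\infty$, rather than merely the forward implication stated there. I would address this by using Proposition~\ref{prop: presentation of invariant}: given an invariant measure $\hat{\rho}_\infty$ of the McKean-Vlasov equation with density proportional to $\exp[-\tfrac{1}{\lambda}(V(q)+\psi\ast\hat{\rho}_\infty(q))]$, the product measure
\[
\rho_\infty(q,p)=Z_\lambda^{-1}\exp\!\left[-\tfrac{1}{\lambda}\Big(\tfrac{p^2}{2}+V(q)+\psi\ast\hat{\rho}_\infty(q)\Big)\right]
\]
is exactly of the self-consistent form \eqref{eq: stationary measure}, because the $p$-marginal of $\rho_\infty$ recovers $\hat{\rho}_\infty$ and hence $\psi\ast\rho_\infty=\psi\ast\hat{\rho}_\infty$. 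By the converse part of Proposition~\ref{prop: presentation of invariant}, this $\rho_\infty$ is invariant for \eqref{eq: equationdimensionles}. The integrability required for $Z_\lambda<\infty$ follows from the convexity at infinity of $V$ and $\psi$, exactly as remarked after Proposition~\ref{prop: presentation of invariant}. With the bijection in hand, the moment transfer is immediate and the theorem follows.
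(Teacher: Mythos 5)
Your argument is correct and follows essentially the same route as the paper: there, Theorem \ref{thm:mesures:excentrees} is obtained by transferring the corresponding result for the McKean--Vlasov diffusion, namely Theorem 2.3 of \cite{JOTP}, through the correspondence of Theorem \ref{virginie}, exactly as you propose, with the Gaussian factorisation in $p$ making the $q$-moments of $\rho_\infty$ coincide with those of its first marginal $\hat{\rho}_\infty$. Two remarks are in order. First, the external result you should quote is Theorem 2.3 in \cite{JOTP}, which is proved via the free-energy functional and is valid in $\mathbb{R}^d$ for interaction potentials of degree $2n$; it is not Proposition 1.2 of \cite{PT}, which is the ingredient behind Proposition \ref{prop:gr:lin:exc} and is confined to $d=1$ with quadratic $\psi$, yielding a first-moment expansion rather than the $2n$-th moment bound under the hypothesis \eqref{eq:thm:mesures:excentrees}. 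Your hedge ``or the relevant excentred-measure result'' describes the right statement in substance, but since the whole proof is a citation transfer, the pointer matters: relying on \cite{PT} alone would not cover the stated generality. Second, your explicit verification of the backward lift --- building $\rho_\infty$ as the product of the normalised Gaussian in $p$ with $\hat{\rho}_\infty$, checking that $\psi\ast\rho_\infty=\psi\ast\hat{\rho}_\infty$ so that the self-consistency \eqref{eq: stationary measure} holds, and then invoking the converse part of Proposition \ref{prop: presentation of invariant} to conclude invariance for \eqref{eq: equationdimensionles} --- fills a step the paper leaves implicit: Theorem \ref{virginie} as stated gives only the forward direction, the one-to-one correspondence being asserted in prose, and it is precisely the backward direction that Theorem \ref{thm:mesures:excentrees} requires. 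So your write-up is, if anything, more complete than the paper's on this point, while being the same proof in essence.
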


This is a consequence of Theorem 2.3 in \cite{JOTP}. The global idea is the same than the one of Proposition \ref{prop:gr:lin:exc} but the technical material is different. Indeed, we use the free-energy functional.

More generally, all the results in \cite{HT1,HT2,HT3,McKean1966,McKean,TT,CRAS,PT,JOTP} hold.

\section*{Acknowledgements}
We would like to thank the anonymous referee for his/her useful suggestions for improving the presentation of the paper.\\[2pt]
(M.H.D) \emph{The work of this paper has started when both the authors participated in the workshop ``Analytic approaches to scaling limits for random system'' held in the Hausdorff Research Institute for Mathematics (HIM) in January 2015. M.H. D would like to thank the HIM for supporting his stay at the HIM.}\\
(J.T.) \emph{I would like to thank Andr\'e Schlichting who invited me to the workshop where I have met M.H. Duong.}







%
%
%
\end{document}